\documentclass[journal]{IEEEtran}

\usepackage[utf8]{inputenc}
\usepackage[english]{babel}
\usepackage{amsmath,amsthm,amssymb,bm}

\usepackage[square,numbers,sort,compress]{natbib}
\usepackage{hyperref}
\hypersetup{bookmarks=true, colorlinks=false, hidelinks}
\usepackage{mathtools}
\usepackage{booktabs}
\usepackage{multirow}

\newcommand{\diff}{\mathop{}\!\mathrm{d}}

\newcommand{\cond}{{\;|\;}}

\newcommand{\expecsym}{\operatorname{\mathbb{E}}}     
\newcommand{\covsym}{\operatorname{Cov}}     
\newcommand{\tracesym}{\operatorname{tr}}           
\let\expec\relax
\let\cov\relax
\let\trace\relax
\newcommand{\expec}[1]{\expecsym [#1]}
\newcommand{\expecbig}[1]{\expecsym \big[ #1 \big]}
\newcommand{\cov}[1]{\covsym [#1]}
\newcommand{\trace}[1]{\tracesym (#1)}
\newcommand{\tracebig}[1]{\tracesym \big( #1 \big)}

\newcommand{\A}{\mathcal{A}}           

\newcommand*{\trans}{{\mkern-1.5mu\mathsf{T}}}

\newcommand*{\T}{\mathbb{T}} 
\newcommand*{\R}{\mathbb{R}} 

\let\norm\relax
\DeclarePairedDelimiter{\normbracket}{\lVert}{\rVert}
\newcommand{\norm}{\normbracket}
\newcommand{\normbig}[1]{\big \lVert #1 \big \rVert}
\newcommand{\normBig}[1]{\Big \lVert #1 \Big\rVert}
\newcommand{\normbigg}[1]{\bigg \lVert #1 \bigg\rVert}

\newcommand{\hessian}{\operatorname{H}}

\newtheorem{theorem}{Theorem}

\newtheorem{algorithm}[theorem]{Algorithm}
\newtheorem{assumption}[theorem]{Assumption}
\newtheorem{remark}[theorem]{Remark}

\begin{document}

\title{Non-linear Gaussian smoothing with Taylor moment expansion}

\author{Zheng Zhao, \IEEEmembership{Student Member, IEEE} and Simo S\"{a}rkk\"{a}, \IEEEmembership{Senior Member, IEEE}
\thanks{This work was supported by Aalto ELEC doctoral school.}
\thanks{Zheng Zhao and Simo S\"{a}rkk\"{a} are with Department of Electrical Engineering and Automation, Aalto University, Finland (e-mail: zheng.zhao@aalto.fi).}}

\markboth{Journal of \LaTeX\ Class Files, Vol. 14, No. 8, August 2015}
{Shell \MakeLowercase{\textit{et al.}}: Bare Demo of IEEEtran.cls for IEEE Journals}
\maketitle

\begin{abstract}
This letter is concerned with solving continuous-discrete Gaussian smoothing problems by using the Taylor moment expansion (TME) scheme. In the proposed smoothing method, we apply the TME method to approximate the transition density of the stochastic differential equation in the dynamic model. Furthermore, we derive a theoretical error bound (in the mean square sense) of the TME smoothing estimates showing that the smoother is stable under weak assumptions. Numerical experiments show that the proposed smoother outperforms a number of baseline smoothers.
\end{abstract}

\begin{IEEEkeywords}
state-space models, continuous-discrete filtering and smoothing, non-linear filtering and smoothing, stochastic differential equations, Taylor moment expansion.
\end{IEEEkeywords}

\IEEEpeerreviewmaketitle

\section{Introduction}
\label{sec:intro}

The aim of this paper is to consider smoothing problems (see, e.g., \cite{Jazwinski1970,Sarkka2019}) on non-linear continuous-discrete state-space model of the form
\begin{equation}
	\begin{split}
		\diff X(t) &= a(X(t)) \diff t + b(X(t)) \diff W(t),\\
		X(t_0)&=X_0 \sim \mathrm{N}(x_0 \cond m_0, P_0),\\
		Y_k &= h(X_k) + \xi_k, 
		\label{equ:problem-formulation}
	\end{split}
\end{equation}
on a temporal domain $\T\coloneqq [t_0, \infty)$, where $X\colon \T \to \R^d$ is an It\^{o} process driven by a standard Wiener process $W\colon\T\to\R^s$ modelling the state of the system. 
The random variable $Y_k\in\R^{d_y}$ stands for the measurement of $X_k \coloneqq X(t_k)$ via a (non-linear) function $h\colon\R^d\to\R^{d_y}$ and with a Gaussian noise $\xi_k \sim \mathrm{N}(\xi_k \cond 0, \Xi_k)$. The drift $a\colon\R^d\to\R^d$ and the dispersion coefficient $b\colon \T\to\R^{d\times s}$ of the stochastic differential equation (SDE) above are assumed to be regular enough so that the SDE is well-defined in It\^{o}'s SDE construction (see, e.g., \cite[][Ch. 5]{Karatzas1991}). 

Suppose that at times $t_1\leq t_2\leq \cdots \leq t_T\in\T$ we have a set of measurement data $y_{1:T} \coloneqq \lbrace y_k\rbrace_{k=1}^T$. The aim is to approximate the so-called smoothing posterior densities using Gaussian densities:
%
%
%
\begin{equation}
	p(x_k \cond y_{1:T}) \approx \mathrm{N}(x_k \cond m^s_k, P^s_k),
	\label{equ:smoothing-post-gaussian}
\end{equation}
for all $k=1,2,\ldots, T$. The goal is thus to compute the means and covariances $\lbrace m^s_k, P^s_k\rbrace_{k=1}^T $ using the measurement data. This is known as a continuous-discrete Gaussian smoothing problem~\cite{Sarkka2013, Sarkka2019, SimoGFS2013, Jazwinski1970, Law2015}. 

There exists a plethora of methods that can be used to obtain the Gaussian approximated density in Equation~\eqref{equ:smoothing-post-gaussian}. When the functions $a$, $b$, and $h$ are linear, the approximation in Equation~\eqref{equ:smoothing-post-gaussian} becomes exact, and one can use (continuous-discrete) Kalman filters and smoothers to solve them exactly (or up to arbitrary precision) in closed form~\cite{Sarkka2019}. However, for non-linear models, the smoothing distributions are no longer exactly Gaussian, while it is possible to approximate them by using, for example, extended Kalman filters and smoothers (EKFSs) \cite{Jazwinski1970}, sigma-point filters and smoothers \cite{Sarkka2013, Sarkka2019}, particle filters and smoothers~\cite{Chopin2020, Bain2009}, or projection filters and smoothers~\citep{Brigo1998, Koyama2018}.

One major challenge in solving the filtering and smoothing problems for non-linear models lies in recursively propagating filtering and smoothing solutions through the SDE in Equation~\eqref{equ:problem-formulation}. To that end, we need to be able to compute the transition densities 
\begin{equation}
	p(x_k \cond x_{k-1}),
	\label{equ:trans-density}
\end{equation}
for $k=1,2,\ldots,T$ in order to carry out the computations. However, the transition density of the SDE in Equation~\eqref{equ:problem-formulation} is, in general, analytically intractable and it needs be approximated. In order to obtain an approximation to the transition density, one solution is to locally approximate the non-linear SDE by a linear SDE \cite{Ozaki1993}. Another commonly used approach is to discretise the SDE by using It\^{o}--Taylor expansions~\cite{Kloeden1992, Cubature2009, CDCKF2010} (e.g., Euler--Maruyama or Milstein's method). However, low-order It\^{o}--Taylor expansions are accurate only when the discretisation time step is sufficiently small, while high-order expansions are numerically efficient only when the dispersion coefficient $b$ is commutative \cite[][Ch. 10]{Kloeden1992} which is too  restrictive for many practical models.

The main contribution of this letter is to develop a Taylor moment expansion (TME) based Gaussian smoother for computing smoothing density approximations as in Equation~\eqref{equ:smoothing-post-gaussian}. In the approach, TME is used to approximate the transition density as follows:
\begin{equation}
	p(x_k \cond x_{k-1}) \approx \mathrm{N}\big(x_k \cond g^M(x_{k-1}), Q^M(x_{k-1})\big), \nonumber
\end{equation}
where $g^M$ and $Q^M$ are determined by the expansion so that they approximate the exact $\expec{X_k \cond x_{k-1}}$ and $\cov{X_k \cond x_{k-1}}$, respectively, up to an arbitrary order of precision $M$. This idea heavily depends on our previous work that applies the TME method to Gaussian filters~\cite{ZhaoTME2020}. In addition, we analyse the stability of the proposed smoother by theoretically showing that the smoother error is bounded in a mean square sense under weak assumptions on the SDE and the filtering estimates. The experimental results show that the proposed smoother outperforms a number of baseline smoothers that are commonly used in practice.

This letter is organised as follows. In Section~\ref{sec:tme-smoother}, we present the Taylor moment expansion (TME) based approach for solving the continuous-discrete Gaussian smoothing problem. In Section~\ref{sec:stability}, we derive the error bound for the smoothing estimates. Experimental results are presented in Section~\ref{sec:experiments}, followed by conclusions in Section~\ref{sec:epilogue}.

\section{Taylor moment expansion Gaussian smoother}
\label{sec:tme-smoother}
Let us suppose that the filtering densities for $k=1,2,\ldots,T$ have already been approximated as Gaussian $p(x_k \cond y_{1:k}) \approx \mathrm{N}(x_k \cond m^f_k, P^f_k)$, where the means and covariance are given by $\lbrace m^f_k, P^f_k\rbrace_{k=1}^T$. These approximations can be computed with many different Gaussian filters (e.g., EKFs or sigma-point filters)~\cite{Kazufumi2000, Wu2006, Wang2019, ZhaoTME2020, Cubature2009}. To be able to implement the standard non-linear Gaussian smoother (see, e.g., \cite{Sarkka2010, Sarkka2013}) for the problem, we need to be able to compute the following quantities:
\begin{equation}
	\begin{split}
		&\int x_{k+1} \, p(x_{k+1} \cond y_{1:k}) \diff x_{k+1}\\
		&\approx \int g(x_k) \, \mathrm{N}(x_k \cond m^f_k, P^f_k) \diff x_k\coloneqq m^-_{k+1},\\
		&\int x_{k+1} \, x_{k+1}^\trans \, p(x_{k+1} \cond y_{1:k}) \diff x_{k+1} - m^-_{k+1} \, (m^-_{k+1})^\trans\\
		&\approx \int \Big(Q(x_{k}) + g(x_k) \, \big(g(x_k)\big)^\trans\Big) \, \mathrm{N}(x_k \cond m^f_k, P^f_k) \diff x_k\\
		&\quad-m^-_{k+1} \, (m^-_{k+1})^\trans,\coloneqq P^-_{k+1},\nonumber
	\end{split}
\end{equation}
and
\begin{equation}
	\begin{split}
		&\int x_k \, x_{k+1}^\trans \, p(x_{k+1} \cond x_k) \, p(x_k \cond y_{1:k}) \diff x_k \diff x_{k+1}\\
		&\quad - m^f_k \, (m^-_{k+1})^\trans \\
		&\approx \int x_k \, \big(g(x_k)\big)^\trans \, \mathrm{N}(x_k \cond m^f_k, P^f_k) \diff x_k - m^f_k \, (m^-_{k+1})^\trans\\
		&\coloneqq D_{k+1},\nonumber
	\end{split}
\end{equation}
where $g(x_k) \coloneqq \expec{X_{k+1} \cond x_k}$ and $Q(x_k) \coloneqq \cov{X_{k+1} \cond x_k}$. Methods for approximating the functions/expectations $g$ and $Q$ include, for example, SDE linearisations~\cite{Ozaki1993} or It\^{o}--Taylor expansions (see Sec.~\ref{sec:intro}). In the following, we show how to use the Taylor moment expansion (TME) method~\cite{ZhaoTME2020} to approximate these quantities. 

Consider a target function $\phi \in C^{2 \, (M +1)}(\R^d;\R)$, where $C^M(\R^d;\R)$ denotes the space of $2\,(M+1)$ times continuously differentiable functions mapping from $\R^d$ to $\R$. Suppose that the SDE coefficients $a \in C^M(\R^d;\R^d)$ and $b \in C^M(\R^d;\R^{d\times s})$ are $M$ times differentiable. It is shown by~\cite{ZhaoTME2020, Zmirou1989, Kessler1997} that for every $t\geq s\in\T$,
\begin{equation}
	\expec{\phi(X(t)) \cond x_s} = \sum^M_{r=0} \frac{(t-s)^r}{r!} \A^r\phi(x_s) + R^{M}_\phi(t,s, x_s),\nonumber
\end{equation}
where operator $\A$ is defined by ($\nabla$ and $\hessian$ denote gradient and Hessian, respectively)
\begin{equation}
	\A\phi(x) = (\nabla_x \phi(x))^\trans \, a(x) + \frac{1}{2}\tracebig{b(x) \, b(x)^\trans \, \hessian_x\phi(x)},\nonumber
\end{equation}
and the remainder is given by
\begin{equation}
	R^{M}_{\phi}(t,s, x_s) = \int^t_s\int^{\tau_1}_{s}\cdots\int^{\tau_M}_{s} \expecbig{\A^{M+1} \phi(X(\tau)) \cond x_s} \diff \tau. \nonumber
\end{equation}
The remainder above is in general intractable, hence, we usually discard it in order to compute the TME numerically. If we do so, the differentiability conditions for $\phi$ and the SDE coefficients are then reduced to $2\, M$ and $M-1$, respectively.

Therefore, by choosing the identity function $\phi^\mathrm{I}(x) = x \in\R^{d}$, we can form an $M$th order TME approximation to $g$ by
\begin{equation}
	\begin{split}
		g(x_k) &\approx \sum^M_{r=0} \frac{1}{r!} \A^r\phi^\mathrm{I}(x_k) \, (t_{k+1}-t_k)^r \coloneqq g^M(x_k),\nonumber
	\end{split}
\end{equation}
where $\A$ is applied elementwise for any vector/matrix-valued target function. As for the covariance function $Q$, we need to introduce $\phi^{\mathrm{II}}(x) = x\,x^\trans \in\R^{d\times d}$. Based on this, one possible approximation to $Q$ is \cite{ZhaoTME2020}
\begin{equation}
	Q(x_k) \approx \sum^M_{r=1} \frac{1}{r!} \, \Phi_r(x_k) \, (t_{k+1}-t_k)^r \coloneqq Q^M(x_k),\nonumber
\end{equation}
where 
\begin{equation}
	\Phi_r(x_k) = \A^r\phi^{\mathrm{II}}(x_k) - \sum^r_{k=0} \binom{r}{k} \A^k\phi^\mathrm{I}(x_s) \big( \A^{r-k}\phi^\mathrm{I}(x_s) \big)^\trans. \nonumber
\end{equation}
\begin{remark}
	There is a fundamental difference between EKFSs and TME in the use of Taylor expansion. In EKFSs, the Taylor expansion is taken on the SDE coefficients, while in TME, the expansion is taken on the expectations (e.g., mean or covariance) with respect to the SDE distribution.
\end{remark}

Putting together all the results above, we define the TME Gaussian smoother in the following algorithm.
\begin{algorithm}[TME-$M$ Gaussian smoother]
	\label{alg:tme-smoother}
	Let $\lbrace m^f_k, P^f_k\rbrace_{k=1}^T$ be a collection of filtering means and covariances. The TME Gaussian smoother obtains $\lbrace m^s_k, P^s_k\rbrace_{k=1}^{T-1}$ by recursively computing 
	\begin{equation}
		\begin{split}
		m^-_{k+1} &= \int g^M(x_k) \, \mathrm{N}(x_k \cond m^f_k, P^f_k) \diff x_k, \\
		P^-_{k+1} &= \int \Big(Q^M(x_{k}) + g^M(x_k) \, \big(g^M(x_k)\big)^\trans\Big) \\
		&\qquad\times \mathrm{N}(x_k \cond m^f_k, P^f_k) \diff x_k -m^-_{k+1} \, (m^-_{k+1})^\trans,\\
		D_{k+1} &= \int x_k \, \big(g^M(x_k)\big)^\trans \, \mathrm{N}(x_k \cond m^f_k, P^f_k) \diff x_k \\
		&\quad- m^f_k \, (m^-_{k+1})^\trans,\\
		G_k &= D_{k+1} \, (P^-_{k+1})^{-1}, \\
		m^s_k &= m^f_k + G_k \, (m^s_{k+1} - m^-_{k+1}),\\
		P^s_k &= P^f_k + G_k \, (P^s_{k+1} - P^-_{k+1}) \, G_k^\trans,
		\label{equ:tme-smoother}
		\end{split}
	\end{equation}
	for $k=T-1, T-2, \ldots, 1$. The recursion should be started from $m^s_T\coloneqq m^f_T$ and $P^s_T\coloneqq P^f_T$ at $k=T$.
\end{algorithm}

Sigma-point methods (e.g., unscented transform~\cite{Julier2004}, Gauss--Hermite~\cite{Wu2006}, spherical cubature~\cite{Cubature2009}, or sparse-grid quadratures~\cite{Jia2012, Radhakrishnan2016}) can be used to numerically approximate the expectations in Algorithm~\ref{alg:tme-smoother}. Specifically, for any integrand $z\colon\R^d \to \R^{n}$, the sigma-point methods approximate Gaussian integrals of the form $\int z(x) \, \mathrm{N}(x \cond m, P) \diff x$ by 
\begin{equation}
	S_{m, P}(z)\coloneqq \sum^K_{i=1} w_i \, z(\chi^i_{m, P}),
	\label{equ:sigma-points}
\end{equation}
where $\lbrace \chi^i_{m, P} \coloneqq m + \sqrt{P} \, \beta_i \rbrace^K_{i=1}$ are the integration nodes, and $\lbrace w_i\geq0 \rbrace^K_{i=1}$ are the associated weights of the nodes. We can define \emph{sigma-point TME Gaussian smoothers} as methods where the integrals in Algorithm~\eqref{alg:tme-smoother} are computed via Equation~\eqref{equ:sigma-points}.

\section{Stability analysis}
\label{sec:stability}
The (sigma-point) TME Gaussian smoother defined via Algorithm~\ref{alg:tme-smoother} is not exact smoother due to the errors introduced by the Gaussian approximations, TME approximations, and numerical integrations. Because these errors accumulate in time, it is important to analyse if the TME smoother is stable in the sense that the mean square error is finite:
\begin{equation}
	\sup_{T\geq 1} \max_{1 \leq k \leq T}\expecbig{\norm{X_k - m^s_k}_2^2} < \infty, \label{equ:sup_max_err}
\end{equation}
where $\norm{\cdot}_2$ stands for the Euclidean and spectral norms for vectors and matrices, respectively. Please note that $m^s_k$ indeed depends on $T$, hence, the error above is a function of both $k$ and $T$. The Equation~\eqref{equ:sup_max_err} essentially means that the smoothing errors are bounded over $k$ for every number of measurements $T\geq 1$.

We show that as long as the filter is stable and the SDE coefficients are suitably regular, then the smoother is stable. In addition, the concluded result is independent of the choice of the measurement model in Equation~\eqref{equ:problem-formulation}. We use the following assumptions in order to obtain the main result in Theorem~\ref{thm:stability}.

\begin{assumption}[Stable filtering]
	\label{assump:stable-filter}
	There exist $c_f(k)$ and $c_P>0$ such that
	\begin{equation}
		\expecbig{\norm{X_k - m^f_k}_2^2} \leq c_f(k) < \infty,\nonumber
	\end{equation}
	for all $k \geq 1$, and that $\sup_{k\geq 1}\trace{P_k^f} \leq c_P<\infty$ almost surely.
\end{assumption}
\begin{assumption}[System regularity]
	\label{assump:system}
	There exists $c_{S}>0$ such that
	\begin{equation}
		\norm{g^M(x) - S_{m, P}(g^M)}_2^2 \leq \norm{\nabla_x g^M(x)}_2^2 \,\norm{x - m}_2^2 + c_{S}\trace{P}, \nonumber
	\end{equation}
	for all $x\in\R^d$, $m\in\R^d$, and $P\in\R^{d\times d}$.
	Furthermore, $\sup_{x\in\R^d} \norm{\nabla_x g^M(x)}_2^2 \leq c_g$, and $\lambda_{\mathrm{min}}(Q^M(x))\geq c_Q >0$, where $\lambda_{\mathrm{min}}$ denotes the minimum eigenvalue.
\end{assumption}
\begin{assumption}
	\label{assump:cross}
	There exists $\overline{c}$ such that $\sup_{k\geq 1}\expecbig{\norm{m^f_{k+1} - S_{m^f_k, P^f_k}(g^M)}_2^2} \leq \overline{c}<\infty$.
\end{assumption}
Assumption~\ref{assump:stable-filter} ensures that the filter result is stable. This is necessary for ensuring stable smoothing, as the smoother takes the filtering result as input and produces the smoothing results by amending the filtering results. It is also worth noting that Assumption~\ref{assump:stable-filter} is based on the standard definitions of stable filters (see, e.g., \cite{Kazufumi2000,Xiong2006,LiLi2012, Bonnabel2012, ZhaoTME2020, Toni2020} for the definitions, and models and filters that suffice the stability assumptions).

Assumption~\ref{assump:system} ensures that the sigma-point integration of the TME approximant $g^M$ does not deviate significantly from $g^M$ with respect to a Lipschitz constant $c_g$. This is a commonly used assumption in analysing the stability of sigma-point filters~\cite[Assump. 4.1]{Toni2020}. In addition, Assumption~\ref{assump:system} requires the TME covariance approximant $Q^M$ to be strictly positive definite. The Bene\v{s} model~\cite{ZhaoTME2020} is an example that verifies Assumption~\ref{assump:system}, where $a(x) = \tanh(x)$ and $b=1$.

Assumption~\ref{assump:cross} says that the distances between the sigma-point mean predictions and the filtering means are uniformly bounded. In other words, the sigma-point mean prediction of $m^f_k$ produced by Algorithm~\ref{alg:tme-smoother} should stay around $m^f_{k+1}$ for all $k\geq 1$. This assumption is needed because the residual between $m^f_k$ and $m^f_{k+1}$ is the key in producing $m^s_k$. Moreover, we argue that it is hard to relax this assumption unless Algorithm~\ref{alg:tme-smoother} is aware how the filtering means are obtained (i.e., the relationship between $m^f_k$ and $m^f_{k+1}$ is not known by the algorithm).

\begin{theorem}
	\label{thm:stability}
	Let $c_G \coloneqq c_P^2 \, c_\chi \, (c_g + c_{S}) \, / \,c_Q^2$, where $\sqrt{c_\chi} \coloneqq \sum^K_{i=1}w_i \, \norm{\beta_i}_2$. Suppose that Assumptions~\ref{assump:stable-filter}, \ref{assump:system}, and~\ref{assump:cross} are satisfied, and that $2 \, c_G < 1$. Then Algorithm~\ref{alg:tme-smoother} with the sigma-point integration in Equation~\eqref{equ:sigma-points} is such that
	\begin{equation}
		\begin{split}
			&\expecbig{\norm{X_k - m^s_k}_2^2} \\
			&\leq 
			\begin{cases}
				c_f(T), &k=T,\\
				2 \, (c_f(T-1) + c_G \, \overline{c}), & k=T-1,\\
				2 \, c_f(k) + \big(\frac{4 \, c_G}{1 - 2\, c_G} + (2\,c_G)^{T-k}\big) \, \overline{c}, & 1\leq k \leq T - 2.
			\end{cases}\nonumber
		\end{split}
	\end{equation}
\end{theorem}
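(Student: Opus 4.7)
The plan is to control the smoother error via a linear recursion driven by the gain $G_k$, reducing everything to a uniform bound $\normbig{G_k}_2^2 \leq c_G$ and a recursion on the quantity $\gamma_j \coloneqq \expecbig{\normbig{m^s_j - m^-_j}_2^2}$ that drives the smoother correction.

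First I would establish $\normbig{G_k}_2^2 \leq c_G$ uniformly in $k$. For the factor $(P^-_{k+1})^{-1}$, I decompose $P^-_{k+1} = S_{m^f_k, P^f_k}(Q^M) + \Sigma_{k+1}$ with $\Sigma_{k+1} \coloneqq \sum_i w_i (g^M(\chi_i) - m^-_{k+1})(g^M(\chi_i) - m^-_{k+1})^\trans \succeq 0$ (standard sigma-point rules satisfy $w_i \geq 0$, $\sum_i w_i = 1$, and $\sum_i w_i \chi_i = m^f_k$). Assumption~\ref{assump:system} then gives $\mineig(P^-_{k+1}) \geq c_Q$, whence $\normbig{(P^-_{k+1})^{-1}}_2 \leq 1/c_Q$. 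For $D_{k+1}$, the same sigma-point identities yield $D_{k+1} = \sum_i w_i (\chi_i - m^f_k)(g^M(\chi_i) - m^-_{k+1})^\trans$, and each summand is bounded using $\normbig{\chi_i - m^f_k}_2 \leq \sqrt{c_P}\,\normbig{\beta_i}_2$ (from $\maxeig(P^f_k) \leq \trace{P^f_k} \leq c_P$) together with the Lipschitz-type inequality of Assumption~\ref{assump:system} applied at $x=\chi_i$, $m=m^f_k$, $P=P^f_k$. Assembling these pieces produces the stated $c_G = c_P^2 c_\chi (c_g + c_{S})/c_Q^2$.

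With the gain bound in hand, the case $k=T$ is immediate from $m^s_T = m^f_T$ and Assumption~\ref{assump:stable-filter}. For $k=T-1$, the smoother formula gives $X_{T-1} - m^s_{T-1} = (X_{T-1} - m^f_{T-1}) - G_{T-1}(m^f_T - m^-_T)$; the inequality $\norm{u+v}_2^2 \leq 2\norm{u}_2^2 + 2\norm{v}_2^2$, the gain bound, Assumption~\ref{assump:stable-filter} on the first term, and Assumption~\ref{assump:cross} on the second (since $m^-_T = S_{m^f_{T-1}, P^f_{T-1}}(g^M)$) produce $2c_f(T-1) + 2c_G\overline{c}$. For $1\leq k\leq T-2$, the smoother recursion itself supplies the crucial identity $m^s_{k+1} - m^f_{k+1} = G_{k+1}(m^s_{k+2} - m^-_{k+2})$, so $m^s_{k+1} - m^-_{k+1} = G_{k+1}(m^s_{k+2} - m^-_{k+2}) + (m^f_{k+1} - m^-_{k+1})$; the $2$-inequality, the gain bound, and Assumption~\ref{assump:cross} then yield the linear recursion $\gamma_{k+1} \leq 2c_G\,\gamma_{k+2} + 2\overline{c}$ with initial value $\gamma_T \leq \overline{c}$. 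Because $2c_G<1$, this geometric recursion unrolls to $\gamma_{k+1} \leq (2c_G)^{T-k-1}\overline{c} + \frac{2\overline{c}\,(1 - (2c_G)^{T-k-1})}{1 - 2c_G}$, and substituting into $\expecbig{\normbig{X_k - m^s_k}_2^2} \leq 2c_f(k) + 2c_G\,\gamma_{k+1}$ (with the loosening $1 - (2c_G)^{T-k-1} \leq 1$) delivers the claimed bound.

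The most delicate step is the uniform gain bound $\normbig{G_k}_2^2 \leq c_G$: it requires simultaneously a positive-definite lower bound on $P^-_{k+1}$ (obtained by isolating the $Q^M$ contribution after discarding the PSD sigma-point covariance of $g^M$) and a careful sigma-point expansion of $D_{k+1}$ that combines Assumption~\ref{assump:stable-filter}'s covariance bound with Assumption~\ref{assump:system}'s Lipschitz-type inequality. Once this bound is in place, everything downstream is a routine linear recursion analysis.
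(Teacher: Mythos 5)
Your proposal is correct and follows essentially the same route as the paper: bound $\norm{G_k}_2^2\leq c_G$ almost surely via the sigma-point representation of $D_{k+1}$ and the $1/c_Q$ bound on $(P^-_{k+1})^{-1}$, then unroll the geometric recursion on $\expecbig{\norm{m^s_{k+1}-m^-_{k+1}}_2^2}$ driven by Assumption~\ref{assump:cross}. The only substantive difference is that you explicitly justify $\mineig(P^-_{k+1})\geq c_Q$ by splitting off the positive-semidefinite sigma-point covariance of $g^M$ from $S_{m^f_k,P^f_k}(Q^M)$ — a step the paper's proof uses implicitly without comment — so your write-up is, if anything, slightly more complete.
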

\begin{proof}
	When $k=T$, by definition $m^s_T \coloneqq m^f_T$, we recover the filtering bound $c_f(T)$ at $T$. When $k\leq T-1$, by expanding Equation~\eqref{equ:tme-smoother} we arrive at 
	\begin{equation}
		X_k - m^s_k = X_k - m^f_k - G_k \, (m^s_{k+1} - m^-_{k+1}),
	\end{equation} 
	and hence, 
	\begin{equation}
		\expecbig{\norm{X_k - m^s_k}_2^2} \leq 2 \, \Big(c_f(k) + \expecbig{ \norm{G_k}_2^2 \, \norm{m^s_{k+1} - m^-_{k+1}}_2^2} \Big). \nonumber
	\end{equation}
	By Assumptions~\ref{assump:stable-filter} to~\ref{assump:cross} and triangle inequalities, we have that almost surely
	\begin{equation}
		\begin{split}
			&\norm{G_k}_2^2 \\
			&= \normbig{S_{m^f_k, P^f_k}\big( (x - m^f_k) \, (g^M(x) - m^-_{k+1})^\trans \big) \, (P^-_{k+1})^{-1}}_2^2\\
			&= \normbigg{\sum^K_{i=1} w_i \, \Big(\sqrt{P^f_k} \, \beta_i \, \big( g^M\big(\chi_{m^f_k, P^f_k}^i\big) - m^-_{k+1}\big)^\trans \Big) (P^-_{k+1})^{-1}}_2^2\\
			&\leq \bigg(\sum^K_{i=1} w_i \, \normBig{\sqrt{P^f_k} \, \beta_i \, \big(  g^M\big(\chi_{m^f_k, P^f_k}^i\big) - m^-_{k+1}\big)^\trans}_2\bigg)^2 \, / \,c_Q^2\\
			&\leq c_P \, c_\chi\, (c_g \, c_P + c_{S} \, c_P) \, / \,c_Q^2 = c_G,\nonumber
		\end{split}
	\end{equation}
	and that
	\begin{equation}
		\begin{split}
			&\expecbig{\norm{m^s_{k+1} - m^-_{k+1}}_2^2} \\
			&= \expecbig{\norm{m^f_{k+1} - m^-_{k+1} + G_{k+1} \, (m^s_{k+2} - m^-_{k+2})}_2^2}\\
			&\leq 2 \, \overline{c} + 2 \, c_G \, \expecbig{\norm{m^s_{k+2} - m^-_{k+2}}_2^2},\nonumber
		\end{split}
	\end{equation}
	when $k\leq T-2$. We can expand the recursion of $\expecbig{\norm{m^s_{k+1} - m^-_{k+1}}_2^2}$ up to $\norm{m^s_T - m^-_T}_2^2$ and get 
	\begin{equation}
		\begin{split}
			\expecbig{\norm{m^s_{k+1} - m^-_{k+1}}_2^2} &\leq 2 \, \overline{c} \sum^{T-k-2}_{j=0} (2 \, c_G)^j + (2 \, c_G)^{T-k-1} \, \overline{c}.\nonumber
		\end{split}
	\end{equation}
	Since $c_G<\frac{1}{2}$, we have $\sum^{T-k-2}_{j=0} (2 \, c_G)^j <\frac{1}{1 - 2 \, c_G}$. By putting together all the bounds above, we conclude the result. 
\end{proof}
Theorem~\ref{thm:stability} shows that the TME smoother in Algorithm~\ref{alg:tme-smoother} has a contractive (in polynomial rate) error bound (provided that the filtering error $c_f(k)$ has a finite bound). In other words, the error $X_k - m^s_k$ is an asymptotically stable process~\cite[][Def. 2]{Tarn1976}. This concludes Equation~\eqref{equ:sup_max_err}.

\section{Experiments}
\label{sec:experiments}
In this section, we test the TME smoother defined in Algorithm~\ref{alg:tme-smoother} on a simulated model and compare it against other commonly used baseline smoothers\footnote{The implementation codes (in both Python and Matlab) of the experiments can be found at \href{https://github.com/zgbkdlm/tmefs}{https://github.com/zgbkdlm/tmefs}.}. The test model we choose here is a Lorenz '63 system~\cite{Law2014, LiXuechen2020} defined by
\begin{equation}
	\begin{split}
		\diff 
		\begin{bmatrix}
			X^1(t)\\X^2(t)\\X^3(t)
		\end{bmatrix}
		&=
		\begin{bmatrix}
			\kappa \, (X^2(t) - X^1(t))\\
			X^1(t) \, (\lambda - X^3(t)) - X^2(t)\\
			X^1(t) \, X^2(t) - \mu \, X^3(t)
		\end{bmatrix}
		+ \sigma \diff W(t),\\
		Y_k &= X^1(t_k) + \xi_k, \quad \xi_k\sim\mathrm{N}(0, 2), \nonumber
	\end{split}
\end{equation}
where $\kappa = 10$, $\lambda=28$, $\mu=2$, $\sigma=5$, and $W$ is a three-dimensional standard Wiener process with a unit spectral density. This model is considered challenging for non-linear filters and smoothers because making accurate predictions from its SDE can be hard~\cite{Evensen2009}. We simulate (numerical) ground truth trajectories and their measurements uniformly at $100$ time instants $0.02, 0.04, \ldots, 2$~s using $10,000$ Euler--Maruyama integration steps between each measurement.

In order to verify the efficacy of TME smoothers, we run combinations of commonly used filters and smoothers. These include a continuous-discrete extended Kalman filter (EKF-RK4) and smoother (EKS-RK4) with 4th order Runge--Kutta integration~\cite[][Alg. 10.35]{Sarkka2019}, and 3rd order Gauss--Hermite sigma-point filters (GHFs) and smoothers (GHSs). In particular, we compare TMEs (in orders 2 and 3) against the Euler--Maruyama (EM) scheme in GHFs and GHSs in order to show the benefits of using the TME scheme in sigma-point Gaussian smoothing.

We evaluate the performance of the smoothers by computing their root mean square errors (RMSEs) with respect to the simulated true trajectories. The means and standard deviations of the RMSEs are computed from $1,000$ independent Monte Carlo (MC) simulations of the model and runs of the smoothers.

\begin{table}[t!]
	\caption{RMSEs (in mean and standard deviation) of the smoothing estimates in combinations of different filters and smoothers. Bold numbers represent the best in each of their rows.}
	\label{tab:rmses}
	\begin{tabular}{@{}ccllll@{}}
		\toprule
		\multicolumn{2}{l}{\multirow{2}{*}{RMSE (std.)}} & \multicolumn{1}{c}{\multirow{2}{*}{EKS-RK4}} & \multicolumn{3}{c}{GHS} \\ \cmidrule(l){4-6} 
		\multicolumn{2}{l}{} & \multicolumn{1}{c}{} & \multicolumn{1}{c}{EM} & \multicolumn{1}{c}{TME-2} & \multicolumn{1}{c}{TME-3} \\ \midrule
		\multicolumn{2}{c}{EKF-RK4} & 18.05 (4.34) & \textbf{4.86} (0.68) & 6.23 (1.10) & 6.18 (1.10) \\ \cmidrule(r){1-2}
		\multirow{3}{*}{GHF} & EM & 14.43 (3.52) & 5.02 (0.77) & 4.93 (0.85) & \textbf{4.82} (0.83) \\
		& TME-2 & 10.27 (2.04) & 5.78 (0.99) & 3.95 (0.53) & \textbf{3.94} (0.53) \\
		& TME-3 & 10.15 (1.93) & 5.76 (0.97) & 3.98 (0.53) & \textbf{3.92} (0.52) \\ \bottomrule
	\end{tabular}
\end{table}

The RMSE results are shown in Table~\ref{tab:rmses}. We see that for all filters (except EKF-RK4), the smoothers that use the TME scheme have the best mean RMESs and lowest standard deviations, followed by EM and EKS. Moreover, the 3rd order TME results in better RMSEs compared to those of the 2nd order TME. The combination of EKF-RK4 and EKS-RK4 is found to be the worst, while the smoother that uses EM turns out to be the best under the EKF-RK4 filter.

\begin{figure}[t!]
	\centering
	\includegraphics[width=.99\linewidth]{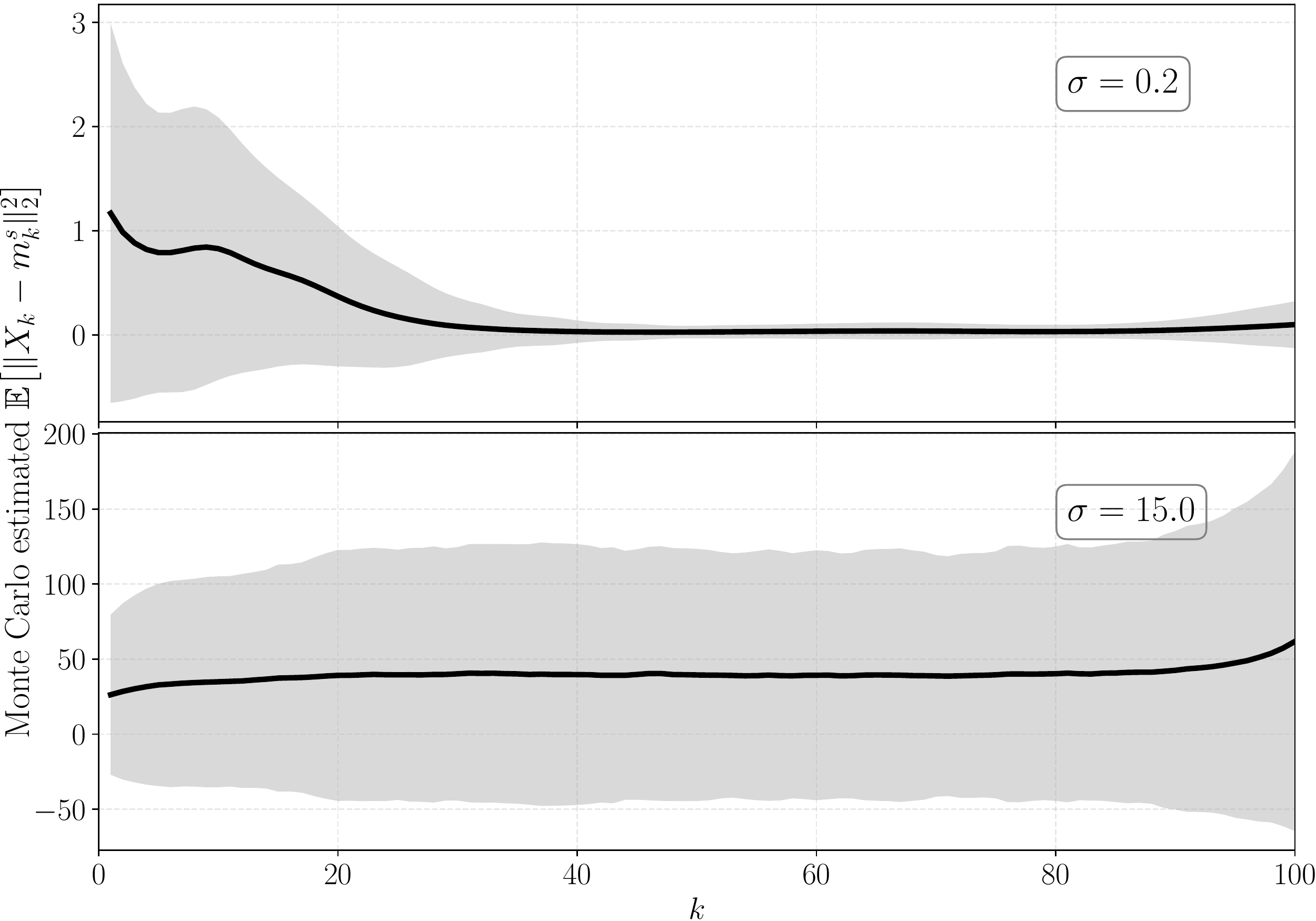}
	\caption{MC estimates of $\expecbig{\norm{X_k - m^s_k}_2^2}$. Shaded area stand for the 0.95 confidence interval of the MC samples.}
	\label{fig:error-bound-sigma}
\end{figure}

In order to verify the stability analysis in Theorem~\ref{thm:stability}, we estimate the error bound $\expecbig{\norm{X_k - m^s_k}_2^2}$ of the TME-2 filter and smoother by using $10,000$ MC runs. Since $c_G$ is a reciprocal of $c_Q^2$, we hereby check if the error bound tends to be contractive as $c_Q$ increases. This is done by tuning the $\sigma$ of the test model, as $c_Q$ is lower bounded by $\sigma^2$.

The error bounds with two different $\sigma$ settings are shown in Figure~\ref{fig:error-bound-sigma}. We see that when $\sigma=0.2$ is small (i.e., $c_Q$ is small and $c_G$ is large) the error bound increases as $k$ approaches to one. On the contrary, the error bound becomes contractive as $k$ goes to one when $\sigma=15$ is large (i.e., $c_Q$ is large and $c_G$ is small). This numerical result verifies the property of the theoretical error bound in Theorem~\ref{thm:stability} which shows that the bound is indeed increasing and contractive (as $k\to 1$) when the smoothing gain bound $c_Q$ is sufficiently large and small, respectively. 

\section{Conclusion}
\label{sec:epilogue}
We have proposed a Taylor moment expansion (TME) based Gaussian smoother for non-linear continuous-discrete state-space models. The key is to use TME to approximate the statistical properties (i.e., mean and covariance) of SDEs, resulting in asymptotically exact propagation of Gaussian smoothing estimates through the SDEs. Moreover, we have theoretically analysed the stability of the proposed TME smoothers, showing that the smoothers are stable if their preceding filtering procedures are stable and the SDE satisfies certain weak assumptions. The numerical experiments verify that the proposed TME smoothers outperform a number of commonly used smoothers, and that the theoretical error bound matches its numerical estimates in a Monte Carlo experiment.

\newpage

\bibliographystyle{IEEEtran}
\bibliography{refs}

\end{document}